\documentclass[a4paper,10pt]{article}
\usepackage{amsmath,amsthm,amssymb,amsfonts,epsfig,epstopdf,titling,url,array,enumerate,cite,mathrsfs,upgreek,authblk,scrextend}
\usepackage[bindingoffset=0.2in,left=1in,right=1in,top=1in,bottom=1in,footskip=.25in]{geometry}
\usepackage[bookmarksnumbered, colorlinks, plainpages]{hyperref}
\hypersetup{colorlinks=true,linkcolor=orange, anchorcolor=green, citecolor=cyan, urlcolor=blue, filecolor=magenta, pdftoolbar=true}
\usepackage{doi}

\usepackage{tikz}
\usepackage{pgfplotstable}
\usepackage{blindtext}
\usepackage{pgfplots}
\usepackage{statistics}
\pgfplotsset{compat=1.16}
\usepgfplotslibrary{external}
\usepackage{graphicx}
\graphicspath{ { ./images/ } }
\usepackage{wrapfig}
\usepackage{float}
\theoremstyle{plain}
\newtheorem{thm}{Theorem}[section]

\newtheorem{lma}[thm]{Lemma}

\newtheorem{ppn}[thm]{Proposition}
\theoremstyle{definition}
\newtheorem{dfn}[thm]{Definition}
\newtheorem{eg}[thm]{Example}

\providecommand{\ams}[1]{\begin{addmargin}[28pt]{28pt}\noindent\textbf{Mathematics Subject Classification:} #1\end{addmargin}}

\title{Fixed point theorems on perturbed metric space with an application}
\author[1]{Dipti Barman}
\author[2]{T. Bag \thanks{Corresponding author}}
\affil[1,2]{Department of Mathematics, Siksha-Bhavana,\authorcr
	Visva-Bharati, Santiniketan, Birbhum, West-Bengal, India-731235\authorcr
     0009-0006-1146-4566$^1$, 0000-0002-8834-7097$^2$\authorcr
	E-mail\textsuperscript{1}: diptibarmanhmt@gmail.com\authorcr
	E-mail\textsuperscript{2}: tarapadavb@gmail.com}
\date{}

\begin{document}
    	\maketitle
    	 \pagestyle{myheadings}
\markright{\footnotesize \it   Fixed point theorems on perturbed metric space with application} 
    	\begin{abstract}
Following the definition of perturbed metric space, in this paper, some fixed point theorems are established for $ F $-perturbed mappings in complete perturbed metric spaces and justify the result by  counter example.  Finally, an application of this theorem for the existence of a solution for the second-order boundary value problem is given.	
    	\end{abstract}

  \noindent \textbf{Keywords:}  Perturbed metric space, $ F $-perturbed mapping, fixed point, Boundary value problem. 
    \ams{46T99, 47H10, 54H25}
         \section{Introduction}


 In modern mathematics, metric spaces and normed linear spaces are two widely used concepts in functional analysis. The notion of metric was developed by Frechet and later Hausdorff presented it axiomatically. After that, several authors developed the metric idea in different approaches. They tried to develop functions which are more general than metric function. In 1963, Gahler \cite{2-metric}  introduced $ 2 $-metric and later $ n $-metric and established many fundamental results of functional analysis. $ D $-metric \cite{D-metric}, $ S $-metric \cite{s-metric},  $ b $-metric \cite{b-metric}, $ G $-metric \cite{G-metric}, $ F $-metric \cite{F-metric} etc. are some generalized metrics defined by different authors. In generalized metric, authors mainly modify the triangle inequality in the Definition of basic metric space. It is worth mentioning the contribution of several authors on fixed point results in generalized metric spaces. For references please see \cite{Generalized theta parametric,Fixed-point-1, Fixed-point-2, Fixed-point-3,Fixed-point-4,Fixed-point-5, Fixed-point-6}. \\

 The measurement of the distance between two points is always tainted by errors. For instance, the imperfection in the adjustment of instruments affects the accuracy of measurements. These errors are small, however their accumulations can be significant. Jleli  et. al \cite{P-metric} realized this fact and introduced the notation of perturbed metric space. They also established some fixed-point theorems in such space. We study  the literature of perturbed metric space and realize that there are huge scope to develop fixed point results for various type of contraction mappings in perturbed metric spaces. \\

 Motivated by this fact, in this paper we introduce the $ F $-perturbed mapping and establish fixed point theorem in perturbed metric space with application for the existence of  solution for a second order boundary valued problem. The effectiveness of this approach is illustrated by a numerical experiment.


    	   \section{Preliminaries}
    	   \begin{dfn}
 \cite{P-metric}   	   	Let $ D, P : X \times X \to [0, \infty) $ be two given mappings. $ D $ is a perturbed metric on $ X $ with respect to $ P $ if the function 
    	   	$$ d = D - P : X \times X \to \mathbb{ R }, ~ (x, y) \to D (x, y) - P (x, y) $$
    	   	is a metric on $ X $. This means that for all $ x, y,  z \in X $, the following conditions hold: 
    	   	\begin{enumerate}[(P1)]
    	   		\item $ ( D - P ) (x, y) \geq 0; $
    	   		\item $ ( D - P ) (x, y) = 0 \iff x = y; $
    	   		\item $ ( D - P ) ( x, y ) = ( D - P) (y, x); $
    	   		\item $ ( D - P ) ( x, y) \leq ( D - P) ( x, z) + ( D - P) (z, y). $
    	   	\end{enumerate}
    	   	We call $ P $ a perturbed mapping, $ d = D - P $ the exact metric, and $ ( X, D, P) $ a perturbed metric space.
    	   \end{dfn}

          \begin{eg}\label{perturbed example 1 }\cite{P-metric}
     Let $ D : \mathbb{ R } \times \mathbb{ R } \to [0, \infty) $ be the mapping defined by 
           $$ D (x, y) = | x -y | + x^2y^4, \quad \forall~ x, y \in \mathbb{ R }. $$
           Then $ D $ is a perturbed metric on $ \mathbb{ R } $ with respect to the perturbed mapping 
            $$ P : \mathbb{ R } \times \mathbb{ R } \to  [0, \infty) $$ given by 
           $$ P (x, y) = x^2y^4, \quad  \forall ~ x, y \in \mathbb{ R }. $$
           In this case, the exact metric is the mapping $ d  $ defined by 
           $$ d (x, y) = | x - y |, \quad \forall~ x, y \in \mathbb{ R }. $$

       \end{eg} 
           
           \begin{figure}
           [ht!]
               \centering
               \includegraphics[width=20cm, height=10cm]{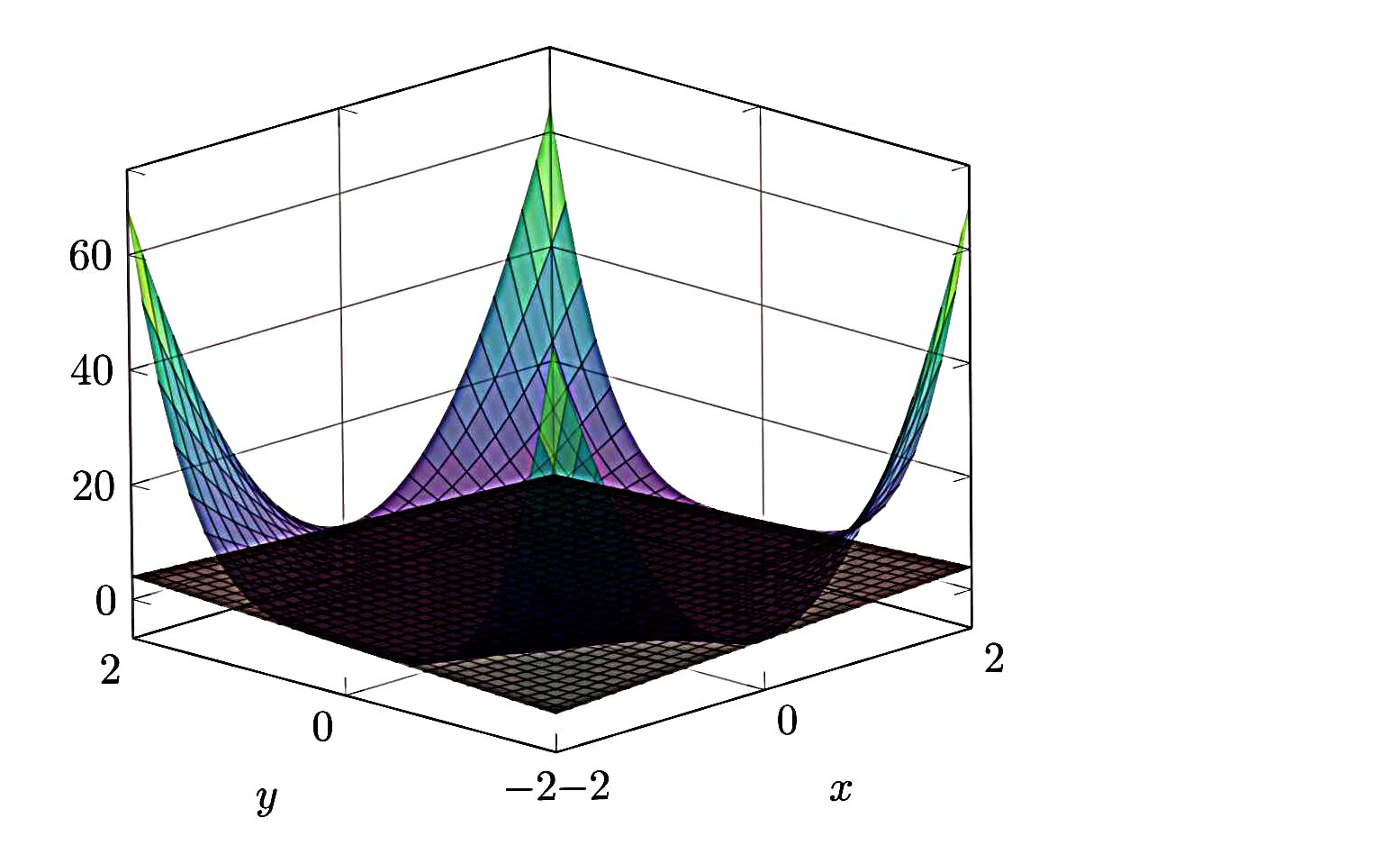}
               \caption{Curve type picture indicates the value of $ D(x, y) = |x - y| + x^2y^4  $ where as rectangular type picture indicates the value of $ d(x, y) = |x - y| $ }
           \end{figure}

            %
            %
             %
           %
           %
          We present the following basic properties of a perturbed metric space, as established by Jleli and Samet \cite{P-metric}. 
          \begin{ppn}
              Let $ D, P, Q : X \times X  \to  [0, \infty) $ be three given mappings and  $ \alpha > 0 $. 
              \begin{enumerate}[(i)]
                  \item If  two triples $ ( X, D, P) $ and $ (X, D, Q ) $ denote two perturbed metric spaces, then the triple $ ( X, D, \frac{ P+Q}{ 2 } ) $ forms a perturbed metric space also.
                  \item The triple $ ( X, \alpha D, \alpha P) $ forms a perturbed metric space if the triple $ (X, D, P) $ consists of a perturbed metric space. 

              \end{enumerate}
          \end{ppn}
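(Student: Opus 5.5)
For part (i), the plan is to show that the exact metric of $(X, D, \frac{P+Q}{2})$ is the average of the two known exact metrics, so no new verification of (P1)--(P4) is needed. Set $d_1 = D - P$ and $d_2 = D - Q$; by hypothesis both are metrics on $X$. I will write
$$ D - \frac{P+Q}{2} = \frac{(D-P) + (D-Q)}{2} = \frac{d_1 + d_2}{2}, $$
which is a pointwise identity. The mapping $\frac{P+Q}{2}$ takes values in $[0,\infty)$ because $P, Q$ do, so it is an admissible perturbed mapping. It then remains to check that $\frac{d_1+d_2}{2}$ is a metric, condition by condition:
\begin{enumerate}[(a)]
\item nonnegativity and symmetry follow termwise from (P1) and (P3) for $d_1, d_2$;
\item the triangle inequality follows by adding the two triangle inequalities (P4) and dividing by $2$;
\item for (P2), if $x = y$ then both terms vanish. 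Conversely, if $\frac{d_1+d_2}{2}(x,y)=0$, then since both summands are nonnegative each must be $0$, and then (P2) for $d_1$ gives $x=y$.
\end{enumerate}
This identity-of-definiteness step is the only place where some care is needed: it uses nonnegativity of each summand, not merely of their sum.

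For part (ii), I would argue in the same way, using $\alpha D - \alpha P = \alpha(D-P) = \alpha d$. The mapping $\alpha P$ is still $[0,\infty)$-valued since $\alpha>0$, and $\alpha D$ is likewise nonnegative. A positive multiple of a metric is a metric: (P1), (P3) and (P4) scale directly, and (P2) holds because $\alpha\neq 0$, so $\alpha d(x,y)=0 \iff d(x,y)=0 \iff x=y$.

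I do not expect a real obstacle. The main point to state explicitly is that the definition requires $D$ and the perturbed mapping to be $[0,\infty)$-valued, which I verify as above. After that, both parts reduce to the standard facts that convex combinations and positive scalings of metrics are metrics.
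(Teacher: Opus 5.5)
Your proof is correct and complete. The identities $D-\frac{P+Q}{2}=\frac{d_1+d_2}{2}$ and $\alpha D-\alpha P=\alpha d$ reduce both parts to the standard facts that averages and positive multiples of metrics are metrics. You also check that the new perturbed mappings are $[0,\infty)$-valued, and you prove (P2) correctly from the nonnegativity of each summand.

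The paper gives no proof of this proposition; it simply quotes the result from Jleli and Samet. So there is no argument in the paper to compare against, and yours is the natural direct verification.
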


            Jleli and Samat \cite{P-metric} introduced the following topological notations related to perturbed metric spaces $ (X, D, P) $.
            
            \begin{dfn}\label{Topological notations}
            	Let $ (X, D, P) $ be a perturbed metric space. Let $ \{ x_n \} $ be  a sequence in $ X $, and $ T : X \to X $ be a mapping.
            	
            	\begin{enumerate}[(i)]
            		\item We say that $ \{ x_n \} $ is a perturbed convergent sequence in $ ( X, D, P ) $ if $ \{ x_n \} $ is convergent in the metric space $ (X, d) $, where $ d = D - P $ is the exact metric.  
            		
            		\item We say that $ \{ x_n \} $ is a perturbed Cauchy sequence in $ ( X, D, P ) $ if $ \{ x_n \} $ is a Cauchy sequence in the metric space $ ( X, d ) $.
            		
            		\item We say that $ ( X, D, P) $ is a complete perturbed metric space if $ (X, d) $ is a complete metric space, or equivalently, if every perturbed Cauchy sequence in $ ( X, D, P ) $ is a perturbed convergent sequence in $ (X, D, P) $.
            		
            		\item We say that $ T $ is a perturbed continuous mapping if $ T $ is continuous with respect to the exact metric $ d $.
            	\end{enumerate}
            \end{dfn}

We develop a fixed-point theorem in Section 3 using the following type of functions.

    	  \begin{dfn}\cite{7}
    	  	Let $ F : \mathbb{R_+} \to \mathbb{ R } $ be a function satisfying
    	  	\begin{enumerate}[(F1)]
    	  		\item F is strictly increasing,  i.e., for all $ t_1, t_2 \in \mathbb{R_+} $ such that $ t_1 < t_2 $  $ \implies  F ( t_1 ) < F ( t_2 ) $,
    	  		\item For each sequence $ \{ t_n \}_{ n \in \mathbb{N} } $ of positive real numbers,  $ \underset{ n \to \infty }{ \lim } t_n = 0 \iff \underset{ n \to \infty }{ \lim } F ( t_n ) = - \infty $,
    	  		\item There exists $ K \in (0, 1) $ such that $ \underset{ t \to 0^+ }{ \lim  } t^k F ( t ) = 0 $.
    	  	\end{enumerate}
    	  \end{dfn} 
Some examples of $ F : R_+ \to R $  mappings which satisfy the conditions $ (F1), (F2) $, and $ ( F3) $ as follows.

\begin{eg}
    \begin{enumerate}[(i)]
        \item $ F (x) = \ln x, \quad x \in (0, \infty) $.
        \item $ F (x) = \ln x +x, \quad x \in (0, \infty) $.
        \item $ F (x) = - \frac{ 1 }{ \sqrt{ x } }, \quad x \in (0, \infty) $.
        \item $ F (x ) = \ln ( x^2 + x), \quad  x \in (0, \infty) $.
    \end{enumerate}
\end{eg}

 We recall the $ F $-contraction theorem from \cite{7} in  exact metric space 
for use in the main result.

\begin{dfn}\cite{7}
    A mapping $ T : X \to X $ is said to be an $ F $-contraction if there exists $ \tau > 0 $ such that 
    $$ d (Tx, Ty) > 0, \quad \forall~ x, y  \in X  \implies \tau + F ( d (Tx, Ty)) \leq F (  d (x, y) ), \quad  \forall~ x, y \in X. $$
\end{dfn}

\begin{thm}\cite{7}
    Let $ (X, d) $ be a complete exact metric space and let $ T : X \to X $ be a $ F $-contraction. $ T $ has a unique fixed point $ x^* \in X $, and for every $ x_0 \in X  $ a sequence $ \{ T^nx_0  \}_{ n \in \mathbb{ N} } $ is convergent to $ x^* $. 
\end{thm}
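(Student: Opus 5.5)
Fix $x_0 \in X$ and consider the Picard sequence $x_{n+1} = Tx_n$. If $x_{n} = x_{n+1}$ for some $n$, then $x_n$ is already a fixed point, so assume $x_{n+1} \neq x_n$ for all $n$. Set $\gamma_n = d(x_n, x_{n+1}) > 0$. Applying the $F$-contraction inequality with $x = x_{n-1}$, $y = x_n$ gives $F(\gamma_n) \leq F(\gamma_{n-1}) - \tau$, and iterating yields
$$ F(\gamma_n) \leq F(\gamma_0) - n\tau \to -\infty. $$
By (F2), $\gamma_n \to 0$.

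The key step, which I expect to be the main obstacle, is upgrading $\gamma_n \to 0$ to a summability estimate. For this I would use (F3): choose $k \in (0,1)$ with $\gamma_n^k F(\gamma_n) \to 0$. Multiplying the inequality above by $\gamma_n^k$ gives
$$ \gamma_n^k F(\gamma_n) - \gamma_n^k F(\gamma_0) \leq -n\tau \gamma_n^k \leq 0 . $$
Letting $n \to \infty$, the left-hand side tends to $0$, so $n\gamma_n^k \to 0$. In particular $n\gamma_n^k \leq 1$ for all large $n$, i.e. $\gamma_n \leq n^{-1/k}$. Since $1/k > 1$, the series $\sum \gamma_n$ converges. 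The triangle inequality (P4) for the exact metric $d$ then gives, for $m > n$,
$$ d(x_n, x_m) \leq \sum_{i=n}^{m-1} \gamma_i , $$
so $\{x_n\}$ is Cauchy in $(X,d)$. By completeness, $x_n \to x^*$ for some $x^* \in X$.

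Next I would show that $T$ is continuous. Whenever $Tx \neq Ty$, (F1) applied to $F(d(Tx,Ty)) < F(d(x,y))$ gives $d(Tx,Ty) < d(x,y)$, so $T$ is $1$-Lipschitz and hence continuous. Therefore $x^* = \lim x_{n+1} = \lim Tx_n = Tx^*$.

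For uniqueness, suppose $x^* \neq y^*$ are both fixed points. Then $d(Tx^*, Ty^*) = d(x^*, y^*) > 0$, and the contraction inequality gives $\tau + F(d(x^*,y^*)) \leq F(d(x^*,y^*))$, contradicting $\tau > 0$. Finally, since $x_0$ was arbitrary, $T^n x_0 \to x^*$ for every starting point $x_0$.
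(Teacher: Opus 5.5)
Your proof is correct and takes essentially the same approach as the paper. The paper does not reprove this cited result, but its proof of the perturbed analogue, Theorem~\ref{Perturbed fixed point theorem}, follows the same Wardowski route: $F(\gamma_n)\le F(\gamma_0)-n\tau$, then (F2) and (F3) give $n\gamma_n^k\to 0$, a $p$-series bound gives the Cauchy property, continuity passes to the limit, and $\tau>0$ gives uniqueness. Your two departures are what this statement needs: you derive continuity from (F1) via $d(Tx,Ty)<d(x,y)$, since it is not a hypothesis here, and you bound $d(x_n,x_m)$ by a tail of the convergent series $\sum\gamma_i$, which makes the Cauchy estimate uniform in $m$ rather than only for each fixed $p$ as in the paper's write-up.
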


    	  \section{ \textbf{Main results} }
    	  Inspired by the work of D. Wardowski \cite{7} in metric spaces, we now extend our results in a setting of a perturbed metric space.
    	Now we begin this section by introducing the concept of $ F $-perturbed mapping and a lemma which is important in proving the main results.
    	
            \begin{lma}\label{lemma 1}
             Let $ X $ be a nonempty set and $ D (x, y ) $ be a perturbed metric on $ X $ with respect to the perturbed mapping  $ P (x, y) \geq 0, \quad \forall ~ x, y \in X $. Define 
             $$ D (x, y) = d (x, y) + P (x, y). $$
             If $ x = y $ then $ D (x, y) = P (x, y) \geq 0 $. \\
             If $ x \ne y $ then $ d (x, y) > 0 $. So that $ D (x, y) > 0 $.

        \end{lma}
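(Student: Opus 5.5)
The plan is to work directly from the defining axioms (P1)--(P4) of a perturbed metric space, applied to the exact metric $d = D - P$, which we rewrite as the decomposition $D(x,y) = d(x,y) + P(x,y)$ for all $x, y \in X$. This is legitimate because $P$ takes values in $[0,\infty)$ and $d$ is by definition the pointwise difference $D - P$, so adding $P(x,y)$ to both sides recovers $D(x,y)$. Everything in the lemma then follows by splitting into the two cases $x = y$ and $x \neq y$.

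For the case $x = y$, condition (P2) gives $(D-P)(x,x) = 0$, that is, $d(x,x) = 0$. Substituting this into the decomposition yields $D(x,x) = 0 + P(x,x) = P(x,x)$. Since $P$ maps into $[0,\infty)$ by hypothesis, we conclude $D(x,x) = P(x,x) \geq 0$. Note that one cannot in general claim $D(x,x) = 0$, since the perturbation need not vanish on the diagonal; Example \ref{perturbed example 1 } with $D(x,x) = x^6$ shows this.

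For the case $x \neq y$, condition (P1) gives $d(x,y) \geq 0$. The ``only if'' direction of (P2) says $d(x,y)=0$ forces $x=y$, so by contraposition $d(x,y) \neq 0$, and hence $d(x,y) > 0$. Then $D(x,y) = d(x,y) + P(x,y) \geq d(x,y) > 0$, using again $P(x,y) \geq 0$.

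There is no genuine obstacle here. The only point requiring care is to invoke the correct direction of the biconditional in (P2) in each case, and to use the nonnegativity of $P$ rather than any property of $D$ on the diagonal. This nonnegativity is what makes the strict inequality $D(x,y) > 0$ for $x \neq y$ available later, so that $F(D(x,y))$ is well defined in the $F$-perturbed setting.
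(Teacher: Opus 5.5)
Your proof is correct and follows the paper's own reasoning, which is the one-line argument packed into the statement: write $D = d + P$, use (P2) on the diagonal to get $D(x,x) = P(x,x) \geq 0$, and off the diagonal use (P1)--(P2) with $P \geq 0$ to get $D(x,y) \geq d(x,y) > 0$. You only make explicit which direction of (P2) is used in each case, and your remark that $D(x,x)$ need not vanish (e.g.\ $D(x,x) = x^6$ in the cited example) is accurate.
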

    	\begin{dfn}
    	A mapping $ T : X \to X $ is said to be an $ F $-perturbed mapping if there exists $ \tau > 0 $ such that 
    	\begin{equation}\label{ Perturbed eq 1}
    	 D ( Tx, Ty ) > 0 \implies \tau + F ( D ( Tx, Ty ) ) \leq F ( D (x, y) ), \quad \forall~ x, y \in X. 	
    	\end{equation}
    \end{dfn}

    \begin{thm}\label{Perturbed fixed point theorem}
        Let $ (X, D, P) $ be a complete perturbed metric space and $ T: X  \to X $ be a perturbed continuous $ F $-perturbed mapping. Then $ T $ has a unique fixed point in $ X $.
    \end{thm}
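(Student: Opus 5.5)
The plan is to run Wardowski's Picard-iteration argument with the perturbed quantity $D$ in place of $d$. The bridge back to the exact metric is the pointwise inequality $d=D-P\le D$, which holds because $P\ge 0$. Fix $x_0\in X$ and put $x_{n+1}=Tx_n$. If $x_{n+1}=x_n$ for some $n$, we already have a fixed point. So assume $x_n\neq x_{n+1}$ for all $n$. Then Lemma \ref{lemma 1} gives $D_n:=D(x_n,x_{n+1})>0$ for every $n$. Applying \eqref{ Perturbed eq 1} with $x=x_{n-1}$, $y=x_n$ and iterating yields
\[
F(D_n)\le F(D_{n-1})-\tau\le\cdots\le F(D_0)-n\tau .
\]
Hence $F(D_n)\to-\infty$, and (F2) gives $D_n\to 0$.

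The key step, and the one I expect to be the main obstacle, is proving that $\{x_n\}$ is a perturbed Cauchy sequence. Knowing only $D_n\to0$ is not enough, so I would use (F3) exactly as Wardowski does. Choose $k\in(0,1)$ with $D_n^kF(D_n)\to 0$. Multiplying the displayed inequality by $D_n^k$ gives
\[
D_n^kF(D_n)-D_n^kF(D_0)\le -n\tau D_n^k\le 0 .
\]
Letting $n\to\infty$ shows $nD_n^k\to0$. Therefore $nD_n^k\le 1$ for all large $n$, that is, $D_n\le n^{-1/k}$. Since $1/k>1$, the series $\sum D_n$ converges. For $m>n$, the triangle inequality (P4) for $d$ together with $d\le D$ gives
\[
d(x_n,x_m)\le\sum_{i=n}^{m-1}d(x_i,x_{i+1})\le\sum_{i=n}^{m-1}D_i\to 0 .
\]
So $\{x_n\}$ is Cauchy in $(X,d)$, which is exactly a perturbed Cauchy sequence in the sense of Definition \ref{Topological notations}.

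By completeness of $(X,D,P)$ there is $x^*\in X$ with $d(x_n,x^*)\to0$. Perturbed continuity of $T$ gives $d(x_{n+1},Tx^*)=d(Tx_n,Tx^*)\to0$. Uniqueness of limits in the metric space $(X,d)$ then forces $Tx^*=x^*$.

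For uniqueness, suppose $y^*\neq x^*$ is another fixed point. Lemma \ref{lemma 1} gives $D(Tx^*,Ty^*)=D(x^*,y^*)>0$. Then \eqref{ Perturbed eq 1} yields $\tau+F(D(x^*,y^*))\le F(D(x^*,y^*))$, which is impossible because $\tau>0$.
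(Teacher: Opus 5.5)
Your proof is correct and follows the paper's route: Picard iteration, the estimate $F(D_n)\le F(D_0)-n\tau$, then (F2) and (F3) to get $nD_n^k\to 0$, the bridge $d\le D$ coming from $P\ge 0$, completeness plus perturbed continuity for existence, and the same $\tau\le 0$ contradiction for uniqueness. Your Cauchy step, which uses convergence of $\sum n^{-1/k}$ to bound $d(x_n,x_m)$ by a series tail uniformly in $m$, is actually tighter than the paper's: the paper only shows $d(x_{n+p},x_n)\to 0$ for each fixed $p$, which by itself does not give the Cauchy property without exactly your uniform estimate.
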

          
          \begin{proof}
          	Let $ x_0 \in X $ be arbitrary and fixed. We define a sequence $ \{ x_n \} \subset X $, $ x_{ n +1 } = T x_n, ~ n = 1, 2, 3, \ldots $ and denote $ \gamma_n = D ( x_ { n +1 }, x_n ), \quad  n = 1, 2, 3, \ldots $. \\
          	If there exists $ n_0 \in \mathbb{N} $ such that $ x_{ n_0 + 1 } = x_{ n_0 } $ then $ Tx_{ n_0 } = x_{ n_0 } $ and the proof is done.\\
          	Suppose $ x_{ n + 1 } \ne x_n,  \quad  \forall ~ n \in \mathbb{N} $. Then $ \gamma_n > 0, \quad  \forall ~  n  \in \mathbb{ N } $  (using Lemma \ref{lemma 1}). \\
          	Using (\ref{ Perturbed eq 1}), the following holds for every $ n \in \mathbb{N} $ 
          	\begin{align}\label{Perturbed eq 2}
          	& F ( D ( x_{ n+ 1}, x_n ) ) + \tau \leq F ( D ( x_n, x_{ n-1} ) ) \nonumber \\
          	 \implies & F ( D (x_{n +1}, x_n) ) \leq F ( D ( x_n, x_{n-1} ) ) - \tau \nonumber \\
          	 \implies & F ( \gamma_n ) \leq F ( \gamma_{ n-1} ) - \tau. 	
          	\end{align}
           Again from (\ref{ Perturbed eq 1}), we have 
           \begin{align*}
           &	F ( \gamma_{ n-1} ) + \tau \leq F ( \gamma_{ n- 2} ) \\
           	\implies & F ( \gamma_{n-1} )  \leq F ( \gamma_{ n- 2}) - \tau.
           \end{align*}
           From (\ref{Perturbed eq 2}), we have 
           \begin{align}\label{Perturbed eq 3}
           &	F ( \gamma_n ) \leq F ( \gamma_{ n- 2} ) - \tau - \tau \nonumber \\
           \implies & F ( \gamma_n ) \leq F ( \gamma_{ n- 2} ) - 2 \tau \nonumber  \\
           &                          \vdots \nonumber \\
          \implies  &  F ( \gamma_n ) \leq F( \gamma_0 ) - n \tau, \quad n \in \mathbb{N}	 \\
           \implies & \underset{  n \to \infty }{ \lim } F ( \gamma_n ) \leq \underset{ n \to \infty }{ \lim } [ F ( \gamma_0 ) - n \tau ] \leq - \infty , \quad n \in \mathbb{ N } \nonumber \\
           \implies & \underset{ n \to \infty }{ \lim } F ( \gamma_n ) = - \infty. \nonumber 
           \end{align}
          By $ (F2) $, we have 
           \begin{equation}\label{Perturbed eq 4}
           	\underset{ n \to \infty }{ \lim } \gamma_n = 0. 
           \end{equation}
       By $ (F3) $,  there exists $ k \in (0, 1) $ such that 
            \begin{equation}\label{Perturbed eq 5}
              \underset{ n \to \infty }{ \lim } \gamma_n^k F ( \gamma_n ) = 0.	
            \end{equation}
            From (\ref{Perturbed eq 3}), we have
            \begin{align*}
            	& F ( \gamma_n ) \leq F ( \gamma_0 ) - n \tau \\
            	\implies & \gamma_n^k F ( \gamma_n ) \leq  \gamma_n^k F ( \gamma_0 ) - n \tau \gamma_n^k \\
            	\implies & \gamma_n^k F ( \gamma_n ) - \gamma_n^k F ( \gamma_0 ) \leq - n \tau \gamma_n^k \\
            	\implies & \underset{ n \to \infty }{ \lim } [ \gamma_n^k F ( \gamma_n ) - \gamma_n^k F ( \gamma_0 ) ] \leq \underset{ n \to \infty }{ \lim } [ -n \tau \gamma_n^k ]  \\
            	\implies & \underset{ n \to \infty }{ \lim } \gamma_n^k F ( \gamma_n ) - \underset{ n \to \infty }{ \lim } \gamma_n^k F ( \gamma_0 ) \leq \underset{ n \to \infty }{ \lim } [ - n \tau \gamma_n^k  ] \\
            	\implies & 0 \leq \underset{ n \to \infty }{ \lim } [ -n \tau \gamma_n^k ] \leq 0 \\
            	\implies & \underset{ n \to \infty }{ \lim } n \tau \gamma_n^k = 0 \\
            	\implies & \underset{ n \to \infty }{ \lim } n \gamma_n^k = 0.  
            \end{align*}
i.e.,     ~    for $ \epsilon > 0 $ there exists $ N \in \mathbb{ N } $ such that 
                \begin{align*}
                	& n \gamma_n^k < \epsilon, \quad  \forall ~n \geq N \\
                	\implies & \gamma_n^k < \frac{ \epsilon }{ n }, \quad \forall~  n \geq N \\
                	\implies & \gamma_n < ( \frac{ \epsilon }{ n } )^{ \frac{ 1 }{ k } }, \quad  \forall~ n \geq N  \\
                	\implies & D ( x_{ n +1 }, x_n ) < ( \frac{ \epsilon }{ n } )^{ \frac{ 1 }{  k } }, \quad  \forall ~  n \geq N \\
                	\implies & d ( x_{ n +1 }, x_n ) + P ( x_{ n + 1 }, x_n ) <  ( \frac{ \epsilon }{ n } )^{ \frac{ 1 }{ k } }, \quad \forall ~ n \geq N \\
                	\implies & d ( x_{ n + 1}, x_n ) < ( \frac{ \epsilon }{ n } )^{ \frac{ 1 }{ k } },
                	 \quad  \forall ~ n \geq N .
                \end{align*}
            Now we will show that $ \{ x_n \} $ is a Cauchy sequence. 
            \begin{align*}
            	d ( x_{ n +p }, x_n ) & \leq d ( x_{ n + p }, x_{ n + p -1 } ) + d ( x_{ n+p-1}, x_{n+p-2} ) + \ldots +  d (x_{ n+1}, x_n ) \\
            	& \leq \frac{ \epsilon^{ \frac{ 1 }{ k } } }{ ( n +p - 1 )^{\frac{ 1 }{ k } } } + \frac{ \epsilon^{ \frac{ 1 }{ k } } }{ ( n + p - 2)^{ \frac{ 1 }{ k } } } + \ldots +  \frac{ \epsilon^{ \frac{ 1 }{ k } } }{ n^{ \frac{ 1 }{ k } } }, \quad  \forall~ n \geq N  \\
            	& \leq \epsilon^{ \frac{ 1 }{ k } } \bigg[ \frac{ 1 }{ ( n + p -1)^{ \frac{ 1 }{ k } } } + \frac{ 1 }{ ( n + p -2 )^{ \frac{ 1 }{ k } } } + \ldots + \frac{ 1 }{ n^{ \frac{ 1 }{ k } } } \bigg], \quad  \forall~  n \geq N \\
            	& = \epsilon^{ \frac{ 1 }{ k } } \sum_{ i = 0 }^{ p -1 } \frac{ 1 }{ ( n + i)^{ \frac{ 1 }{ k } } }, \quad  \forall ~  n \geq N.
            \end{align*}
        Since $ \sum_{ i = 0 }^{ p -1 } \frac{ 1 }{ ( n + i)^{ \frac{ 1 }{ k } } } $ is  convergent and converges to zero, so $ \underset{ n \to \infty }{ \lim } d (x_{ n + p }, x_n ) = 0, \quad  p = 1, 2, 3, \ldots $. \\
        Hence, $ \{ x_n \} $ is a Cauchy sequence in the exact metric $ ( X, d ) $ that is $ \{ x_n \} $ is a perturbed Cauchy sequence in the perturbed metric space $ ( X, D, P) $. By the completeness of the perturbed metric space $ (X, D, P) $, we deduce that there exists $ x^* \in X $ such that 
        $$ \underset{ n \to \infty }{ \lim } d (x_n, x^* ) = 0. $$ 
        We now show that $ x^* $ is a fixed point of $ T $. Since $ T $ is a perturbed continuous mapping, then 
        $$ \underset{ n \to \infty }{ \lim } d ( Tx_n, Tx^* )= 0 \quad  \text{ i.e., } \quad   \underset{ n \to \infty }{ \lim } d ( x_{ n + 1 }, Tx^* ) = 0. $$
        From the triangle inequality of $ d $, we have 
        \begin{align*}
        &	 d ( x^*, Tx^* ) \leq d ( x^*, Tx_n ) + d ( Tx_n, Tx^* ) 
        	  =  d (x^*, x_{ n + 1 } ) + d ( x_{ n + 1 }, Tx^* ) \\
        	\implies  &   \underset{ n \to \infty }{ \lim } d ( x^*, Tx^* )   \leq \underset{ n \to \infty }{ \lim } [ d (x^*, x_{ n + 1 } ) + d ( x_{ n+1 }, Tx^* ) ]  \\
        	\implies & d ( x^*, Tx^*) = 0 \\
        	\implies & Tx^* = x^*.
        \end{align*}
    To prove that $ x^* $ is a unique fixed point.\\
    Suppose there exists $ y^* \in X $ such that $ Tx^* = x^* \ne y^* = Ty^* $.
    \begin{align*}
     & \tau + F ( D ( Tx^*, Ty^* ) ) \leq F ( D (x^*, y^* ) ) \\
     \implies & \tau + F ( D ( x^*, y^* ) ) \leq F ( D ( x^*, y^* ) )  \\
     \implies & \tau \leq  F ( D (x^*, y^*) ) - F ( D  ( x^*, y^*) ) \\
     \implies & \tau \leq 0, ~ \text{ a contradiction. }	
    \end{align*}
               Hence $ x^* $ is a unique fixed point. The proof is completed.
          \end{proof} 	

     We now present an example which supports  the  Theorem  \ref{Perturbed fixed point theorem}.      
            \begin{eg}\label{fixed point example 1}
               Let $ D : [0, 1] \times [0, 1] \to [0, \infty) $ be the mapping defined by 
               $$ D (x, y) = | x- y| + (x-y)^4, \quad  \forall ~  x, y \in [0, 1]. $$
               Then $ D $ is a perturbed metric on $ [0, 1] $ with respect to the perturbed mapping $ P : [0, 1] \times [0, 1] \to [0, \infty) $ given by $ P (x, y) = ( x - y)^4 $.\\
               In this case, the exact metric is the mapping $ d : [0, 1] \times [0, 1] \to [0, \infty) $ defined by 
               $$ d (x, y)= | x - y|, \quad  \forall~ x, y \in [0, 1]. $$	
For $ x = 0, y = \frac{ 1 }{ 2 }, z = \frac{ 1 }{ 3 } $, we have
               \begin{align*}
               	D (0, \frac{ 1 }{ 2 }) = 0.5625, ~ D (0, \frac{ 1 }{ 3}) + D ( \frac{ 1 }{ 3 }, \frac{ 1 }{ 2 } ) = 0.513
               \end{align*}
            which  shows that $$ D (0, \frac{ 1 }{ 2 }) > D (0, \frac{ 1 }{ 3 }) + D ( \frac{ 1 }{ 3 }, \frac{ 1 }{ 2} ). $$
            Hence, $ D $ is not an exact metric on $ [0, 1] $. \\
            Let us define a mapping $ T : [0, 1] \to [0, 1] $ by 
            $$ Tx = \frac{ x }{ 2 }, \quad  x \in [0, 1]. $$
            Take $ F (x) = \log x, \quad x \in (0, \infty)  $, which satisfies the conditions $ (F1), (F2), (F3) $ and $ \tau = \log2 > 0 $.\\
            Then we have
            \begin{align*}
            	D (Tx, Ty) & = |Tx - Ty| + ( Tx - Ty)^4  \\
            	& = \bigg| \frac{ x }{ 2 } - \frac{ y }{ 2 } \bigg| + ( \frac{ x }{ 2 } - \frac{ y }{ 2} )^4 \\
            	& = \frac{ | x - y|}{ 2 } + \frac{ (x-y)^4 }{ 16 }.
            \end{align*}
            Clearly, we have 
            \begin{align*}
            	\tau + F ( D ( Tx, Ty) ) & = \log2 + \log ( D ( Tx, Ty) ) \\
            	& = \log ( 2 D ( Tx, Ty) ) \\
            	& = \log \Big[ 2 \Big \{ \frac{ | x - y|}{ 2 } + \frac{ (x - y)^4 }{ 16 } \Big \} \Big] \\
            	& = \log \Big [ | x - y | + \frac{ ( x - y)^4 }{ 8 } \Big ] \\
            	& \leq \log \Big [ | x -y | + ( x - y)^4 \Big ]  \\
            	& = F ( D (x, y) ).
            \end{align*}
        Hence, $ T $ is a $ F $-perturbed mapping. So, by the Theorem \ref{Perturbed fixed point theorem},  $ T $ has a unique fixed point in $ X $ which is $ x = 0 $.
            \end{eg}
        
            \begin{figure}
            
                \centering
                \includegraphics[width=20cm, height=10cm]{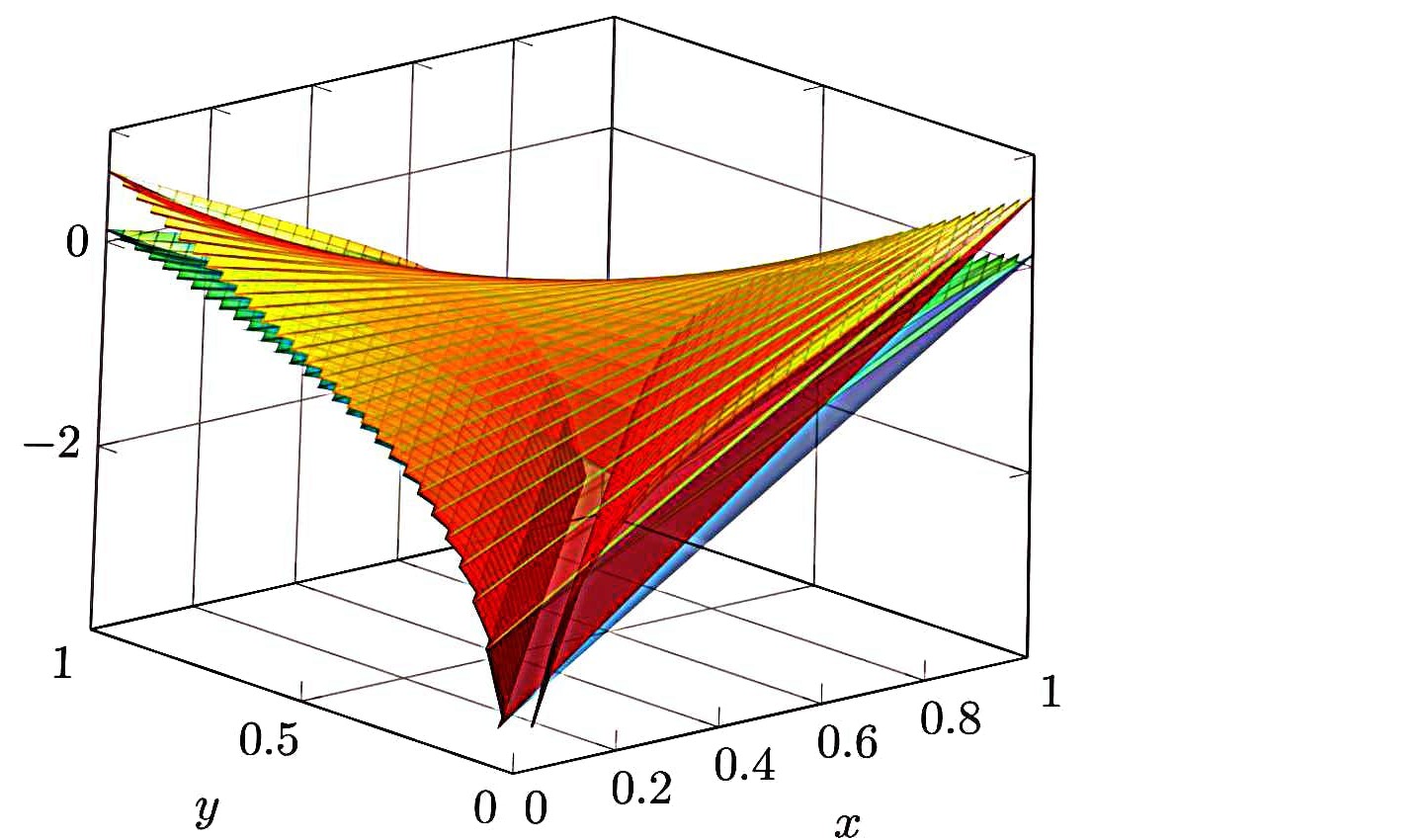}
                \caption{The upper picture indicates $ F( D(x, y)) $ where, as the lower picture indicates $ \tau + F ( D (Tx, Ty)) $}
                \label{fig:enter-label}
            \end{figure}

\vspace{0.5cm}

            \section{\textbf{Existence of a Solution to the Boundary Value Problem}}
              We give an application for the second-order boundary value problem of a unique solution with a binary relation, which is applicable in the Theorem  \ref{Perturbed fixed point theorem}. \\
              Let us consider a second-order boundary value problem as follows
              \begin{equation}\label{Boundary value problem 1}
                - u^{\prime \prime } ( t ) = f ( t, u (t) ), \quad  t \in (0, 1) \quad 
                u(0) = u(1) = 0	
              \end{equation}
          where the mapping $ f : (0, 1) \times \mathbb{ R } \to \mathbb{ R } $ is continuous. \\
          This type of problem arises in Physics and Engineering, such as in heat conduction, elastic deformation, and electrostatics.\\
          One can easily show that the problem (\ref{Boundary value problem 1}) is equivalent to the integral equation
          $$ u ( t ) = \int_{ 0 }^{ 1 } G (t, s) ~f (s, u(s) )~  ds $$
          where $ G (t, s) $ is Green's function given by
          $$ G (t, s) = \begin{cases}
          	s ( 1 - t), \quad  0 < s \leq t \leq 1 \\
          	t ( 1 - s ), \quad  0 < t < s \leq  1.
          \end{cases} $$
          The Green's function satisfies the boundary conditions $ u ( 0 ) = u ( 1 ) = 0 $. \\
          Let $ X = C[0, 1] $, the class of all real-valued continuous functions defined on $ [0, 1] $ and define a function
          \begin{equation}\label{Boudary value problem 2}
          	 D (u_1(t), u_2(t)) = \underset{ t \in [0, 1] }{ \sup } | u_1 ( t ) - u_2 ( t ) | +  | u_1 ( 0 ) - u_2 ( 0 ) |, \quad   \forall~ u_1, u_2 \in X.
          \end{equation} 
           $ D $ is a perturbed  metric on $ C[0, 1] $ with respect to the perturbed mapping $ P : C [0, 1] \times C[0, 1] \to [0, \infty) $ defined by the relation 
           $$ P ( u_1 (t), u_2 (t) ) = | u_1 (0) - u_2 ( 0 ) |, \quad  \forall ~ u_1, u_2 \in C[0, 1]. $$
           The exact metric is the function $ d : C [0, 1] \times C[0, 1] \to [0, \infty ) $ defined by the relation 
          \begin{equation}\label{Boundary value problem 3}
          	d (u_1 (t), u_2 (t) ) = \underset{ t \in [0, 1] }{ \sup } | u_1 ( t ) - u_2 ( t ) |, \quad \forall ~   u_1, u_2 \in C[0, 1].
          \end{equation}
           It is known that the set $ C[0, 1] $ endowed with the metric $ d $ defined by the relation (\ref{Boundary value problem 3}), that is $ ( C[0, 1], d ) $ is a complete metric space. Consequently, by (iii) of the definition (\ref{Topological notations}), it is found that $ ( C[0, 1], D, P) $ is a complete perturbed metric space. 
           
           \begin{thm}
           	Consider the perturbed metric space $ (X, D, P) $ defined on (\ref{Boudary value problem 2}). Suppose that the boundary value problem satisfies the following condition
           	\begin{equation}\label{Boundary value problem 4}
           	 | f \big(s, u_1 ( s) \big) - f \big(s, u_2 ( s ) \big) | \leq \exp({ - \tau })  \cdot | u_1 ( s ) - u_2 ( s )  |, \quad \tau > 0.	
           	\end{equation}
           Then the boundary value problem (\ref{Boundary value problem 1}) has a unique solution.
           
           \begin{proof}
           	 We define a mapping $ T : X \to X $ by 
           	 $$ T \big( u (t ) \big)  =  \int_{ 0 }^{ 1 } G (t, s) ~ f \big(s, u ( s ) \big) ~  ds, \quad  s \in [0, 1]. $$
           	 For all $ u_1, u_2 \in X $, we have 
           	 \begin{align*}
           	 	\tau + \ln  \big(D ( Tu_1, Tu_2 ) \big)  & = \tau + \ln ~ [ \underset{ t \in [0, 1] }{ \sup } ~ | Tu_1 ( t ) -Tu_2 ( t ) | + | Tu_1 ( 0 ) - Tu_2 ( 0 ) |  ~]   \\
           	 	 & = \tau  + \ln ~ \Big[ ~\underset{ t \in [0, 1] }{ \sup } ~ \Big|  \int_{ 0 }^{ 1 } G (t, s) ~ f \big(s, u_1 ( s ) \big) - \int_{ 0 }^{ 1 } G (t, s)~ f \big(s, u_2 ( s ) \big)  \Big| \\
           	 	 &  + 
           	 	   \Big| \int_{ 0 }^{ 1 } G (0, s)~ f \big( s, u_1 ( s ) \big) - \int_{ 0 }^{ 1 } G (0, s)~ f \big(s, u_2 ( s ) \big) \Big| ~~\Big] \\
           	 	 &  \leq \tau + \ln ~ \Big[ ~  \underset{ t \in [0, 1] }{ \sup } ~ \Big| \int_{ 0 }^{ 1 } g (t, s)~ ds | \cdot | \int_{ 0 }^{ 1 } \big( f (s, u_1 ( s ) ) - f ( s, u_2 ( s ) ) \big)~ ds \Big| \\
           	 	 &  +  \underset{ t \in [0, 1] }{ \sup } ~ \Big| \int_{ 0 }^{ 1 } g (0, s) ~ ds \Big| \cdot \Big| \int_{ 0 }^{ 1 } \big( f (s, u_1 ( s ) ) - f ( s, u_2 ( s ) ) \big) ~  ds \Big| ~~\Big]  \\
           	 	 &  < \tau + \ln ~ \Big[ ~  \frac{ 1 }{ 2 }  \cdot  \Big| \int_{ 0 }^{ 1 } \big( f (s, u_1 ( s ) ) - f ( s, u_2 ( s ) ) \big) ~ ds \Big|  \\
           	 	 &  + \frac{ 1 }{ 2 } \cdot  \Big| \int_{ 0 }^{ 1 } \big( f (s, u_1 ( s ) ) - f ( s, u_2 ( s ) ) \big)~ ds \Big|~~ \Big]  \\
           	 	 & < \tau +  \ln~  \Big[~  \Big| \int_{ 0 }^{ 1 } \big( f (s, u_1 ( s ) ) - f ( s, u_2 ( s ) ) \big) ~ ds \Big|~~ \Big]  \\
           	 	 & = \tau + \ln ~ \Big[ \underset{ s \in [0, 1] }{ \sup }~ \big| f (s, u_1 ( s ) ) - f (s, u_2 ( s ) )  \int_{ 0 }^{ 1 } ~ ds \big|~~ \Big]  \\
           	 	 &  = \tau + \ln~ [~ \exp({ - \tau } ) \cdot \underset{ s \in [0, 1]}{ \sup } ~ | u_1 ( s )  - u_2 ( s ) | ~~]  \\
           	 	 & = \tau + \ln~ [~ \exp({ - \tau } ) ~] + \ln [ ~\underset{ s \in [0, 1]}{ \sup } ~  | u_1 ( s ) - u_2 ( s ) |~ ]   \\
           	 	 & = \tau - \tau +  \ln~ [~ \underset{ s \in [0, 1]}{ \sup } ~| u_1 ( s ) - u_2 ( s ) |~ ]  \\
           	 	 & = \ln ~[ ~ \underset{ t \in [0, 1] }{ \sup } ~| u_1 ( t ) - u_2 ( t ) | +  | u_1 ( 0 ) - u_2 ( 0 ) | ~]  = \ln~ [~ D \big(  u_1 ( t ), u_2 ( t ) \big) ~] 
           	 \end{align*}
            $ \implies 	\tau + \ln~ [~ \big( D ( Tu_1, Tu_2 ) \big) ~]  < \ln ~[~ D \big( u_1 ( t ), u_2 ( t ) \big) ~]  $. \\
            Thus, the mapping $ T $ fulfills the conditions of the Theorem  \ref{Perturbed fixed point theorem} and therefore $ T $ has a unique fixed point in $ X $.  Consequently, the boundary value problem (\ref{Boundary value problem 1}) has a unique solution in $ X $.
           \end{proof} 
          
\section{Numerical Example} \end{thm}
In this section, a numerical example is established to indicate the significance of the given results. Let $ X $ be a set of all continuous real-valued functions defined on $ [0, 1] $, i.e, $ X = C [0, 1] $ and define $ D : X \times X  \to  [0, \infty) $ by 
$$ D  \big(u_1(t), u_2(t) \big) =  \underset{ t \in [0, 1] }{ \sup } ~ |u_1 (t) - u_2 (t) | + |u_1(0) - u_2(0) |, \quad \forall ~ u_1, u_2 \in X.  $$
Then $ D $ is a perturbed metric on $ X $ with respect to the perturbed mapping $ P : X \times  X \to [0, \infty) $ defined by the relation
$$ P \big( u_1 (t ), u_2(t) \big) = |u_1( 0 ) - u_2(0) |, \quad \forall~  u_1, u_2 \in X. $$
and the exact metric is the function $ d : X \times  X  \to [0,\infty) $ defined by $$ d \big ( u_1(t), u_2(t) \big) = \underset{ t \in [0, 1] }{ \sup } ~ |u_1(t) - u_2(t) |, \quad \forall ~ u_1, u_2 \in X. $$
Clearly, $ (X, D, P) $ is  complete perturbed metric space.\\
Let $ T $ be the operator defined by 
\begin{equation}\label{Nu eq -1} 
T u(t) = \int_0^1 G(t, s)~ f \big( s, u(s) \big)~ ds,\quad s \in [0, 1]
\end{equation}
where $$  G (t, s) = \begin{cases}
    s (1-t), \quad  0 < s \leq t \\
    t (1-s), \quad t \leq s < 1.
\end{cases}$$
Let $ f \big(s, u(s) \big) = \big(  \frac{ s + 0.5 }{ 2 } \big) \cdot \sin u(s) $. Then (\ref{Nu eq -1}) becomes
\begin{equation}\label{Nu eq-2}
    T u (t) = \int_0^1 G (t, s) ~ \big( \frac{ s + 0.5 }{ 2 }\big) \cdot \sin u (s)~ ds, \quad u (t) \in X. 
\end{equation}
Suppose the following condition holds
$$  \big| f \big( s, u_1(s) \big) - f  \big(  s, u_2( s ) \big) \big| \leq  \exp ( -\tau ) \cdot \big| u_1 (s) - u_2(s) \big|, \quad \tau \in (0.287, 1.386) .$$

For all $ u_1 (t ), u_2(t) \in X $ and $ \tau \in (0.287, 1.386) $, we have 
\begin{align*}
    \tau + \ln \big( D ~(Tu_1(t), Tu_2(t) ~\big) &  = \tau  + \ln~  [ \underset{ t \in [0, 1] }{ \sup }~ | Tu_1(t) - Tu_2(t) | + |Tu_1(0) - Tu_2(0) | ]   \\
    &  =  \tau + \ln ~ [ \underset{ t \in [0, 1] }{ \sup } ~ \big| \int_0^1 G (t, s)~ f (s, u_1(s))~ ds  -  \int_0^1 G (t,s)~ f (s, (u_2 (t) )~ ds  \big| \\
    & + \big | \int_0^1 G (0, t)~ f (s, u_1(s))~ ds  - \int_0^1~ f (s, u_2(s)) ~ ds \big| ]  \\
    &  \leq \tau + \ln ~ [ \underset{ t \in [0, 1] }{ \sup }~  \big | \int_0^1 g(t,s) ~ ds \big| \cdot \big| \int_0^1 \Big( \big( f (s, u_1(s)  \big) - f \big( s, u_2(s)  \big) \Big) ~ ds \big| \\
    & +  \underset{ t \in [0, 1] }{ \sup }~  \big | \int_0^1 g(0,s) ~ ds \big| \cdot \big| \int_0^1 \Big( \big( f (s, u_1(s)  \big) - f \big( s, u_2(s)  \big) \Big) ~ ds \big|]  \\
    & < \tau +  \ln ~ [ \frac{ 1 }{ 2 } \cdot \big|  \int_0^1 \big(  f (s, u_1(s) - f (s, u_2(s)) \big) ~ds \big| \\
    & + \frac{ 1 }{ 2 } \cdot \big|  \int_0^1 \big(  f (s, u_1(s) - f (s, u_2(s)) \big) ~ds \big|    ] \\
    &  < \tau + \ln ~ [ \big|  \int_0^1 \big(  f (s, u_1(s)) - f (s, u_2(s)) \big)~ ds \big|]  \\
    & = \tau + \ln ~ [ \underset{ s \in [0, 1] }{ \sup } ~ | f (s, u_1(s) - f (s, u_2(s))| ~ \int_0^1 ds ]  \\
    & \leq \tau + \ln ~ [ \exp (- \tau) \cdot \underset{ s \in [0, 1] }{ \sup } ~ |u_1(s) - u_2(s) | ] \\
    & = \tau - \tau + \ln ~ [ \underset{ s \in [0, 1] }{ \sup } ~ |u_1(s) - u_2(s) | ] \\
    & = \ln ~ [  \underset{ s \in [0, 1] }{ \sup } ~ |u_1(s) - u_2(s) | + |u_1(0) - u_2(0) | ] \\
    & = \ln ~ [ D ( u_1(t), u_2(t)) ].
\end{align*}
As a result, the conclusion is that the condition of Theorem  \ref{Perturbed fixed point theorem} are satisfied. Consequently, the Integral Equation  (\ref{Nu eq-2}) has unique solution. It can be easily checked that $ u (t ) = t $ is the exact solution of Equation (\ref{Nu eq-2}). \\

Now, we shall use the iteration method to underline the validity of our approaches
\begin{align*}
u_{ n + 1 } ( t) = T u_n(t) & = \int_0^1 G (t,s) \cdot f (s, u_n (s)) ~ ds  \\
& = \int_0^1 G (t,s) \cdot \frac{ (s + 0.5) }{ 2 } \cdot \sin u(s) ~ ds 
\end{align*}
For numerical iteration, we choose initial  guess $ u_0 (t ) = t $.

 \begin{figure}[h]
     \centering
     (a) \includegraphics[height=7cm, width=7cm]{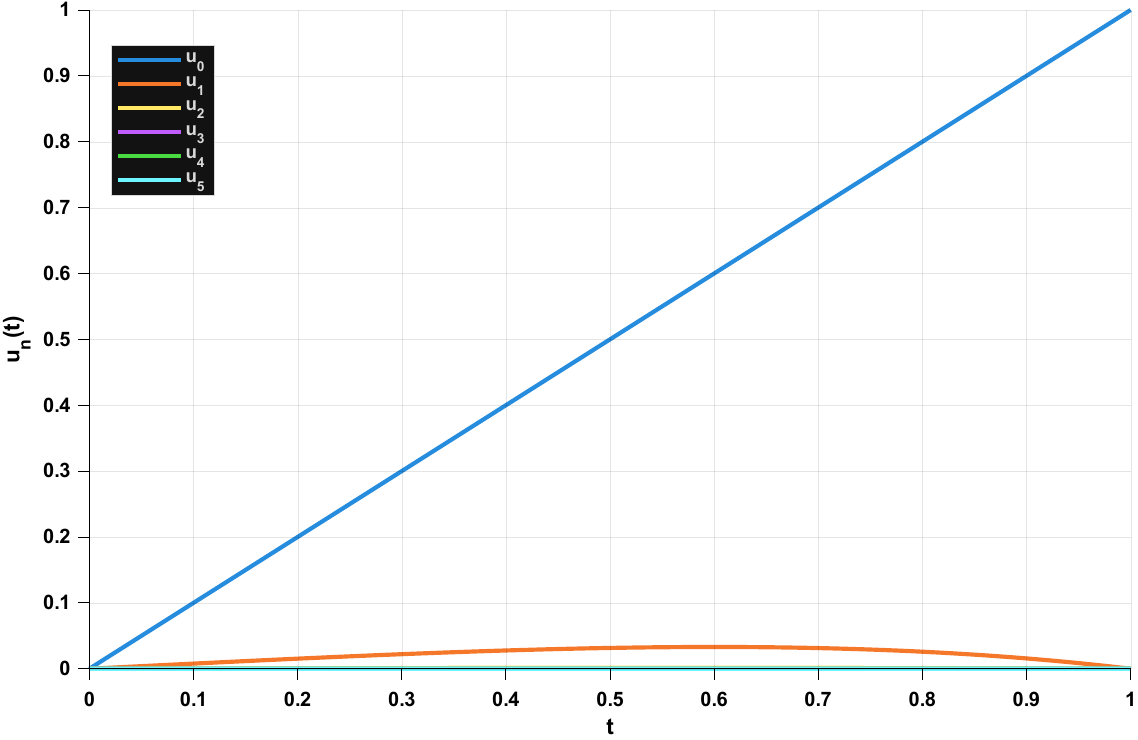}
     (b) \includegraphics[height=7cm, width=7cm]{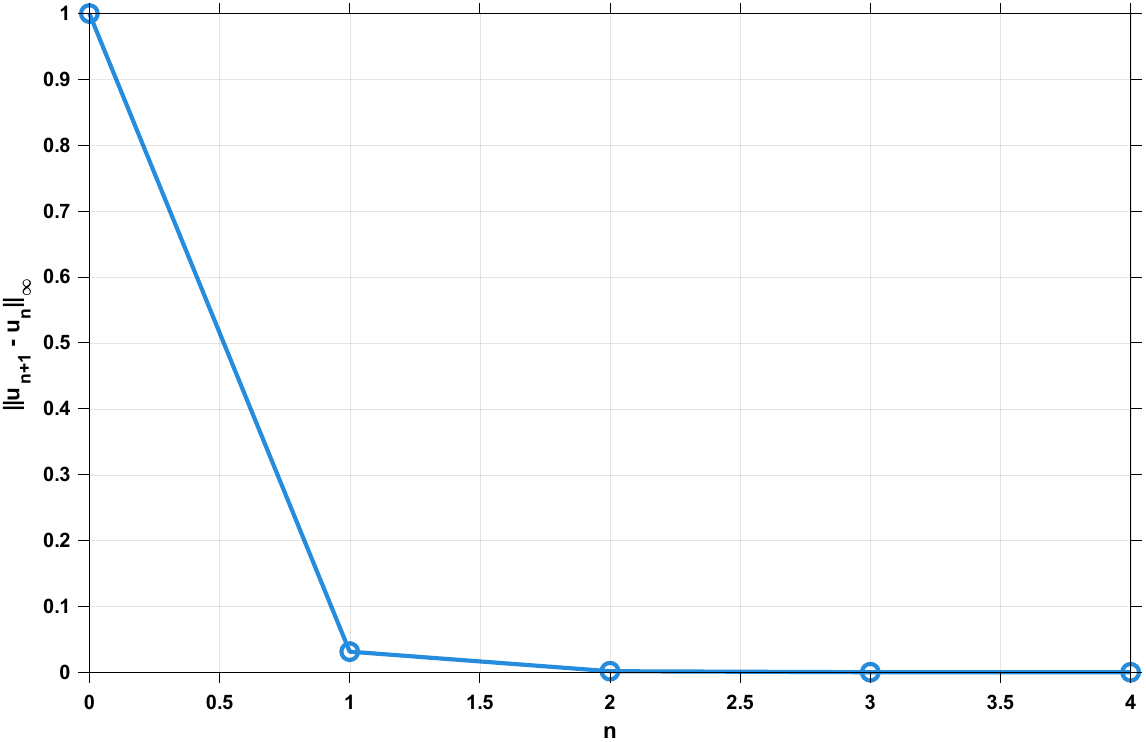}
     
     \caption{(a) Iteration $ u_n (t ) $  vs $ t $   ~~~     (b) $ ||u_{n +1} (t ) - u_n (t) | |_\infty$  vs $  n  $ }
     \label{Fig-3}
 \end{figure}
 Figures \ref{Fig-3} (a) show that the sequence $ u_{ n + 1} (t ) =T u_n (t) = \int_0^1 G(t, s) \cdot \frac{ (s + 0.5) }{ 2 } \cdot \sin u(s)~ ds $ converges to $ 0 $. 
Here, $ ||u_{n + 1 }(t) - u_n(t) ||_\infty = \max |u_{ n + 1}(t) -u_n|(t)|$ = sup-norm error of the iteration.

\section{Another generalized fixed-point theorem}
  In this section, we prove another theorem on the fixed points of an operator that includes the Banach fixed point theorem of a perturbed metric space \cite{P-metric}.  
              \begin{thm}\label{Next perturbed fixed point theorem}
             Let $ X $ be a complete perturbed metric space and let $ T $ be a mapping of $ X $ into itself. Suppose that for each positive integer $ n $, 
             $$ D ( T^nx, T^ny ) \leq a_n \cdot D (x, y) $$
             for all $ x, y \in X $ where $ a_n > 0 $ is independent of $ x, y $. If the series $ \sum_{ n = 1 }^{ \infty } a_n $ is convergent and $ T $ is a perturbed continuous function, then $ T $ has a unique fixed point in X.

             \begin{proof}
             	Let $ x_0 $ be an arbitrary element in $ X $ and consider the sequence $  \{ x_n \} $ of iterates 
             	$$ x_n = T^nx_0,  \quad  n = 1, 2, 3, \ldots. $$
             	We note that $ x_{  n +1 } = T^{ n + 1 }x_0 = T^n( Tx_0) = T^nx_1 $ and $ x_{ n + 1} = TT^nx_0 = Tx_n $.\\
             	Now,  we have
             	\begin{align*}
             		D(x_n, x_{ n + 1} ) & =  D( T^nx_0, T^{ n + 1}x_0 ) \\
             		& = D ( T^nx_0, T^nx_1) \\
             		& \leq a_n \cdot  D(x_0, x_1). 
             	\end{align*}
             Therefore
             \begin{equation}\label{ Next F.T equ 1}
             	D(x_n, x_{ n + 1 } ) \leq a_n  \cdot  D (x_0, x_1).
             \end{equation}
             If $x_0 = x_1 $ then a fixed point is obtained. Let therefore $ x_0 \ne x_1 $ and $ k $ be a positive integer with $ k > D (x_0, x_1) $. 
             \\
             As the series $ \sum_{ n = 1 }^{ \infty } a_n $ is convergent, then $ \underset{ n \to \infty }{ \lim } a_n = 0 $.\\
             From (\ref{ Next F.T equ 1}), we have 
             \begin{align*}
             &	D(x_n, x_{ n + 1} ) \leq a_n D(x_0, x_1) \\
             or~ ~ & d(x_n, x_{ n + 1} ) + P (x_n, x_{ n + 1} ) \leq a_n D (x_0, x_1)\\
             or~~ & d (x_n, x_{ n + 1} ) \leq a_n D (x_0, x_1) \\
             or~~ & \underset{ n \to \infty }{ \lim } d (x_n, x_{ n + 1} ) \leq D (x_0, x_1) \underset{ n \to \infty }{ \lim } a_n \\
             or~~ & \underset{ n \to \infty }{ \lim } d (x_n, x_{ n + 1}  ) \leq 0  \\
             or~~ & \underset{ n \to \infty }{ \lim } d (x_n, x_{ n + 1} ) = 0 
             \end{align*}
            
            Now we will show that $ \{ x_n \} $ is a Cauchy sequence.
            \begin{align*}
            &	d (x_n, x_{ n + 1 } ) \leq d (x_n, x_{ n + 1} ) + d (x_{n + 1}, x_{ n + 2} ) + \ldots + d (x_{ n + p - 1}, x_{ n + p} ) \\
             or~~ & \underset{ n \to \infty }{ \lim } d (x_n, x_{ n + p} ) \leq \underset{ n \to \infty }{ \lim } [ d (x_n, x_{ n + 1 } ) + d (x_{ n + 1}, x_ { n + 2} ) + \ldots + d ( x_{ n + p -1}, x_{ n + p} ) ] \\
             or~~ & \underset{ n \to \infty }{ \lim } d ( x_n, x_{ n + p } ) = 0. 
            \end{align*}
        Therefore, $ \{ x_n \} $ is a Cauchy sequence and the completeness of $ X $ implies that the existence of $ \xi \in X $ such that 
         $$ \underset{ n \to \infty }{ \lim } x_n = \xi. $$
         Since $ T $ is a perturbed continuous mapping, then 
         $$ \underset{ n \to \infty }{ \lim } d ( Tx_n, T \xi) = 0 ~~  i.e.,~ \underset{ n \to \infty }{ \lim } d (x_ { n + 1}, \xi ) = 0. $$
         If for $ n $ be a positive integer, then we obtain 
         
         \begin{align*}
          	d (\xi, T \xi ) &  \leq d ( \xi, x_{ n + 1} ) + d (x_{ n + 1}, T \xi ) \\ 
          	& = d ( \xi, x_{ n + 1} ) + d (Tx_n, T \xi ) \\
          	& = d (\xi, x_{ n + 1} ) + d (Tx_n, T \xi ) \\
          	& = \underset{ n \to \infty }{ \lim } d ( \xi, x_{ n + 1} ) + d ( Tx_n, T \xi )  \\
          	& \leq 0.
         \end{align*}
            
          Therefore,  $ d( T \xi, \xi ) \implies T \xi = \xi $.\\
          We now prove the uniqueness. If $ \eta  $ be a fixed point of $ T $ then for any positive integer,
          $$ \eta = T^n \eta \text{ and } \xi = T^n \xi.  $$
          so, 
          \begin{align*}
          	& D (\xi, \eta ) = D (T^n \xi, T^n \eta ) \leq a_n \cdot  D ( \xi, \eta ) \\
          	or~~ & ( a_n - 1) \cdot  D (\xi, \eta ) \geq 0 
          \end{align*}
           
           If $ \xi = \eta $ then the uniqueness is done. \\
           From Lemma \ref{lemma 1},
           if $ \xi \ne \eta $ then $ D (\xi, \eta ) > 0 $ so $ ( a_n - 1) \geq 0 ~ i.e., a_n \geq 1 ~ \forall ~ n  $. \\
           So $ a_n $  cannot tend to zero and this contradiction shows that $ \xi = \eta $ \\
           Thus $ T $ has a unique fixed point in $ X $.
             \end{proof}

            \end{thm}

            We now deduce Banach's fixed point theorem using perturbed metric \cite{P-metric} from  fixed-point Theorem \ref{Next perturbed fixed point theorem}.
              
              \begin{proof}
              	Since $ T $ is a Banach fixed point theorem using  perturbed mapping \cite{P-metric}, there exists $ 0 < \alpha < 1 $ such that 
              	
              	\begin{equation}\label{Next F.P equ 2}
              		D(Tx, Ty) \leq \alpha \cdot  D (x, y), \quad  \forall~ x, y \in X 
              	\end{equation}
              
              For $ x, y \in X $, we get from (\ref{Next F.P equ 2})
              
              \begin{align*}
              	 & D (T^2 x, T^2 y) \leq \alpha \cdot  D (Tx, Ty) \leq \alpha^2 \cdot  D (x, y) \\
              	 &  D (T^3x, T^3y) \leq \alpha^3 \cdot  D (x, y)
              \end{align*}
          
               and in general $ D (T^nx, T^ny) \leq \alpha^n \cdot  D (x, y) $ \\
               Since the series $ \sum_{ n = 1 }^{ \infty } \alpha^n $ is convergent, by (\ref{Next perturbed fixed point theorem}), $ T $ has a unique fixed point in $ X $. 
               
              \end{proof}

               We now present an example which supports Theorem \ref{Next perturbed fixed point theorem}.
               
               \begin{eg}
               	 Let $ D : [0, 1] \times [0, 1] \to [0, \infty) $ be the mapping defined by 
               	 $$ D (x, y) = | x - y | + (x - y )^2, \quad  \forall ~  x, y \in X.  $$
               	 Then $ D $ is a perturbed metric on $ [0, 1] $ with respect to the perturbed mapping $ P : [0, 1] \times [0, 1] \to [0, \infty) $ given by $ P (x, y) = (x - y)^2 $.\\
Let us define a self-mapping $ T : [0, 1] \to [0, 1] $ by 
               	 $$ Tx = \frac{ x }{ 3 }, \quad  x \in [0, 1]. $$
               	 For all $ x, y \in [0, 1] $, we have
               	 $$ D (T^nx, T^ny) = | \frac{ x }{ 3^n } - \frac{ y }{ 3^n }| + ( \frac{ x }{ 3^n } - \frac{ y }{ 3^n } ) = \frac{ 1 }{ 3^n } \cdot | x - y | + \frac{ 1 }{ 3^{2n} } ( x - y)^2.  $$
               	 We know that $ \frac{ 1 }{ 3^n }, \frac{ 1 }{ 3^{ 2n } } < 1 $.\\
               	 Now, 
               	 \begin{align*}
               	 	D (T^nx, T^ny) &  = \frac{ 1 }{ 3^n } \cdot | x - y | + \frac{ 1 }{ 3^{ 2n} } ( x - y)^2 \\
               	 	& < | x - y | + (x - y)^2.
               	 \end{align*}
                 Since $ a_n > 0 $ for all $ n \in \mathbb{ N } $ then 
                $$ D (T^nx, T^ny) < a_n \cdot ( | x - y | + (x - y)^2 ) = a_n \cdot D(x, y), \quad \forall ~ x, y \in [0, 1]. $$
                $ T $ is also a perturbed continuous mapping. Therefore, all the conditions of Theorem \ref{Next perturbed fixed point theorem} are satisfied. Hence $ T $ has a unique fixed point $ x = 0 $.

               \end{eg}

               \section*{Conclusion}
               In this framework, first we establish a fixed-point theorem for $ F $-perturbed mapping in a perturbed metric space and provide an application of second-order boundary value problem. Using an iteration method based on the fixed point approach, we find the approximate solution of the second-order boundary value problem. The numerical results have verified that the approach employed in the article is valid.  Next, another fixed point theorem establishes which includes the Banach fixed point theorem in perturbed metric spaces. In  future, researchers will be benefited to develop more results in perturbed metric space by using our results of this manuscript. 

\section*{Acknowledgment}
The authors are thankful to the Department of Mathematics, Siksha-Bhavana, Visva-Bharati, India.  The  author D.B. acknowledges financial support awarded by CSIR-UGC NET (DECEMBER-2022/JUNE-2023), University Grand Commission (UGC), New Delhi, India, through research  fellowship [Ref. No 231610065558] for carrying out research work leading to the preparation of this manuscript.


    \end{document}